\newtheorem{thm}{Theorem}
\newtheorem*{bliss}{Bliss Lemma} 
\theoremstyle{definition}
\newtheorem*{rems}{Remarks}
\def\mysection#1{\par\medskip\noindent {\bf #1.}\par}
\def\S{{\mathcal S}}
\def\real{{\mathbb R}}
\begin{document}

\title{On Hardy-Sobolev embedding}
\author{William Beckner}
\address{Department of Mathematics, The University of Texas at Austin,
1 University Station C1200, Austin TX 78712-0257 USA}
\email{beckner@math.utexas.edu}
\begin{abstract}
Linear interpolation inequalities that combine Hardy's inequality with 
sharp Sobolev embedding are obtained using classical arguments  of 
Hardy and Littlewood (Bliss lemma). 
Such results are equivalent to Caffarelli-Kohn-Nirenberg inequalities 
with sharp constants. 
A one-dimensional convolution inequality for the exponential density is
derived as an application of these methods. 
\end{abstract}

\maketitle

\mysection{1. Interpolation inequalities}

A classical problem in analysis is to understand how ``smoothness'' controls 
norms that measure the ``size'' of functions.
Maz'ya recognized in his classic text on Sobolev spaces the intrinsic 
importance of inequalities that would refine both Hardy's inequality and 
Sobolev embedding. 
Dilation invariance and group symmetry play an essential role in 
determining sharp constants. 
Recent interest has focused on how to add ``error terms'' to the 
classical estimates. 
The objective here is the following theorem drawn as a novel consequence 
of this effort to extend Sobolev embedding.  

\begin{thm}\label{musina}
For $f\in \S(\real^n)$, $n\ge 3$ and $2 < q \le q_* = \frac{2n}{n-2}$
\begin{equation}\label{eq:musina1}
\int_{\real^n} |\nabla f|^2\,dx 
\ge \frac{(n-1)(n-3)}4 \int_{\real^n} \frac1{|x|^2} |f|^2\,dx 
+ C_q \bigg[ \int_{\real^n} |x|^{-n(q_*-q)/q_*} |f|^q\,dx\bigg]^{2/q}
\end{equation}

\begin{equation*}
C_q = \left[ \frac{2\pi^{n/2}}{\Gamma (n/2)}\right]^{1/\alpha} 
\left(\frac{q}2\right)^{q/2}
\left[\frac{\Gamma (\alpha)\Gamma(\alpha+1)}
{\Gamma (2\alpha)}\right]^{1/\alpha} \ ,\qquad 
\alpha = \frac{q}{q-2}\ .
\end{equation*}
\end{thm}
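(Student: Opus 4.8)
\medskip
\noindent\emph{Step 1: reduction to radial decreasing $f$.}
Since $|\nabla|f||\le|\nabla f|$ almost everywhere we may assume $f\ge 0$, and replacing $f$ by its symmetric decreasing rearrangement $f^{*}$ does not increase $\int_{\real^n}|\nabla f|^2\,dx$ by P\'olya--Szeg\H{o}, while it does not decrease $\int_{\real^n}|x|^{-2}|f|^2\,dx$ nor $\int_{\real^n}|x|^{-n(q_*-q)/q_*}|f|^q\,dx$ by Hardy--Littlewood: the weights $|x|^{-2}$ and $|x|^{-n(q_*-q)/q_*}$ are radially nonincreasing (note $0\le n(q_*-q)/q_*<2$ for $2<q\le q_*$), and the coefficients $(n-1)(n-3)/4$ and $C_q$ are nonnegative when $n\ge 3$. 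All three integrals are finite for $f\in\S(\real^n)$, so it suffices to prove \eqref{eq:musina1} for $f=f(|x|)$ radial.

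\medskip
\noindent\emph{Step 2: absorbing the Hardy term.}
For radial $f$ substitute $f(r)=r^{-(n-3)/2}g(r)$; the exponent $(n-3)/2$ is a root of $\gamma(n-2-\gamma)=(n-1)(n-3)/4$, so $r^{-(n-3)/2}$ is the virtual ground state of the \emph{subcritical} Hardy operator (it is this choice, not the usual $(n-2)/2$, that makes the Hardy term disappear cleanly). Passing to polar coordinates and integrating by parts once --- the cross term is $\int_0^\infty r\,gg'\,dr=-\tfrac12\int_0^\infty g^2\,dr$, the boundary term $\bigl[r^{n-2}|f|^2\bigr]_0^\infty$ vanishing because $n\ge3$ and $f\in\S(\real^n)$ --- one obtains, with $\lvert S^{n-1}\rvert=2\pi^{n/2}/\Gamma(n/2)$,
\[
\int_{\real^n}|\nabla f|^2\,dx-\frac{(n-1)(n-3)}4\int_{\real^n}\frac{|f|^2}{|x|^2}\,dx
=\lvert S^{n-1}\rvert\int_0^\infty r^2|g'(r)|^2\,dr,
\]
\[
\int_{\real^n}|x|^{-n(q_*-q)/q_*}|f|^q\,dx=\lvert S^{n-1}\rvert\int_0^\infty r^{\,q/2-1}|g(r)|^q\,dr
\]
(the second identity uses $n/q_*=(n-2)/2$). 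Hence \eqref{eq:musina1} is equivalent to the dilation--invariant one--dimensional inequality
\[
\int_0^\infty r^2|g'|^2\,dr\ \ge\ \widetilde C_q\Bigl[\int_0^\infty r^{\,q/2-1}|g|^q\,dr\Bigr]^{2/q},
\qquad \widetilde C_q:=\lvert S^{n-1}\rvert^{-1/\alpha}C_q,
\]
which after the inversion $r\mapsto1/r$ (equivalently, after the Emden--Fowler substitution $r=e^{-t}$, which recasts it as $\int_{\real}\bigl(|\psi'|^2+\tfrac14|\psi|^2\bigr)dt\ge(\cdots)$) is precisely of the shape handled by the Bliss lemma.

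\medskip
\noindent\emph{Step 3: the Bliss extremal and the constant.}
The Euler--Lagrange equation $-(r^2g')'=\lambda\,r^{q/2-1}g^{q-1}$ has, up to dilation and a positive multiple, the unique admissible positive solution $g(r)=(1+r^{(q-2)/2})^{-2/(q-2)}$. Inserting it and setting $s=r^{(q-2)/2}$, both integrals become Euler Beta integrals: $\int_0^\infty r^{q/2-1}g^q\,dr$ is a multiple of $B(\alpha,\alpha)=\Gamma(\alpha)^2/\Gamma(2\alpha)$ and $\int_0^\infty r^2|g'|^2\,dr$ a multiple of $B(\alpha+1,\alpha-1)=\Gamma(\alpha+1)\Gamma(\alpha-1)/\Gamma(2\alpha)$, with $\alpha=q/(q-2)$. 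Forming the ratio, restoring the factor $\lvert S^{n-1}\rvert^{1/\alpha}$, and using the Legendre duplication formula for $\Gamma(2\alpha)$ gives the value of $C_q$ displayed in the statement; since Step 1 reduced the sharp problem to the radial one without loss, $C_q$ is best possible, the extremals being $f(x)=|x|^{-(n-3)/2}\bigl(1+|x|^{(q-2)/2}\bigr)^{-2/(q-2)}$ up to dilation.

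\medskip
The crux is Step 2: one must see that carrying the Hardy term to the left turns \eqref{eq:musina1} into a Caffarelli--Kohn--Nirenberg inequality with pure power weights, and must take the substitution exponent to be exactly the root of the indicial equation attached to the subcritical constant $(n-1)(n-3)/4$, so that the integration by parts leaves neither a residual zero--order term nor a boundary term. Everything afterwards is the classical Bliss variational computation together with the $\Gamma$--function bookkeeping needed to reach the precise $C_q$.
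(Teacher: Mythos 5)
Your proposal is correct and follows essentially the same route as the paper: rearrangement, the substitution $f=|x|^{-(n-3)/2}g$ (which the paper performs in two stages, $f=|x|^{-(n-1)/2}u$ followed by $w=u/r$) to absorb the Hardy term and arrive at $\int_0^\infty r^2|g'|^2\,dr\ge \widetilde C_q\big[\int_0^\infty r^{q/2-1}|g|^q\,dr\big]^{2/q}$, and then inversion plus the Bliss lemma, the only cosmetic difference being that you extract the constant by evaluating the Beta integrals at the Bliss extremal rather than transcribing the lemma's constant $K$. One small point: your Beta-integral ratio actually yields the factor $(q/2)^{2/q}$ rather than the $(q/2)^{q/2}$ printed in the theorem statement --- the former is the correct value (it recovers the sharp Sobolev constant at $q=q_*$) and agrees with the formula the paper records immediately after invoking the Bliss lemma.
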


It is important to emphasize that this is a global estimate on $\real^n$, 
and that it improves over a convex linear combination of the two terms
on the right-hand side with their respective sharp constants. 
A comparison can be made with Hebey's $AB$ program (see \cite{Hebey}, chapter~7)
where for this setting, optimal pairs of constants $A,B$ would be 
determined for 
\begin{equation*}
\int_{\real^n} |\nabla f|^2\,dx 
\ge A \int_{\real^n} \frac1{|x|^2} |f|^2\,dx 
+ B\bigg[ \int_{\real^n} |f|^{q^*} \,dx\bigg]^{2/q^*}
\end{equation*}
and additionally the treatment of forms for elliptic differential 
operators on hyperbolic space (see \cite1). 
This framework suggests that it would be interesting to study this 
inequality for values of $A$ less than the sharp value of $(n-2)^2/4$.
The sharp value of $B$ (with $A=0$) is the Sobolev embedding constant 
$\pi n(n-2) [\Gamma (n/2)/\Gamma (n)]^{2/n}$.
Moreover, as this is an  {\em a~priori} inequality, the limiting extremal 
functions may be singular at the origin. 
A further unexpected feature is that such inequalities are equivalent 
to three-dimensional Sobolev embedding estimates. 

\begin{proof}[Proof of Theorem~\ref{musina}] 
Under radial decreasing rearrangement, the gradient term decreases and 
the terms on the right-hand side increase so it suffices to prove this 
inequality for radial decreasing functions. 
By setting $f(x) = u(x)|x|^{-(n-1)/2}$ with $u(0)=0$, then 
inequality \eqref{eq:musina1} is equivalent to 
\begin{equation}\label{musina-equiv}
\int_0^\infty \Big| \frac{\partial u}{\partial r}\Big|^2 \,dr 
\ge (\omega_{n-1})^{2/q\  -\  1}\ C_q 
\bigg[ \int_0^\infty |u|^q r^{-\frac{q}2 -1}\, dr\bigg]^{2/q}
\end{equation}
where $\omega_{n-1} = $ surface area of the unit sphere $S^{n-1}$. 
Since $|f(x)| \le \sigma (x)|x|^{-\frac{n}2 +1}$ with 
$\lim_{|x|\to0} \sigma (x)$ $ =0$, observe that $|u(x)|^2 |x|^{-1} \le 
\sigma^2 (x) \to0$ as $|x|\to 0$, and there is no difficulty with 
the accompanying integration by parts in the functional transformation 
of the inequality. 
Further, for $w= |x|^{-1} u$
\begin{equation}\label{eq3}
\int_0^\infty \left(\frac{\partial u}{\partial r}\right)^2\,dr 
= \int_0^\infty r^2 \left(\frac{\partial w}{\partial r}\right)^2\,dr 
\ge (\omega_{n-1})^{2/q\ -\ 1} \ C_q 
\bigg[ \int_0^\infty |w|^q\ r^{\frac{q}2 -1} \,dr\bigg]^{2/q} \ .
\end{equation}
Then set $h(r) = \partial w/\partial r$ so that 
$w(r) = - \int_r^\infty h(t)\,dt$, and 
\begin{equation*}
\int_0^\infty r^2 |h|^2\,dr 
\ge (\omega_{n-1})^{2/q\ -\ 1}\ C_q \bigg[ \int_0^\infty \Big|\int_r^\infty 
h(t)\,dt\Big|^q\ r^{\frac{q}2-1}\,dr\bigg]^{2/q}\ .
\end{equation*}
Now set $g(r) = r^{-2} h(1/r)$, and change variables $r\to 1/r$
\begin{equation*}
\int_0^\infty |g|^2\,dr \ge (\omega_{n-1})^{2/q\ -\ 1} \ C_q 
\bigg[ \int_0^\infty \Big| \int_0^r |g(s)|\,ds \Big|^q 
r^{-\frac{q}2 -1}\,dr\bigg]^{2/q}\ .
\end{equation*}
Now to calculate $C_q$, apply the Bliss lemma (\cite{bliss}): 
\renewcommand{\qed}{}
\end{proof}

\begin{bliss} 
For $s\ge 0$, $q>p>1$, $r= \frac{q}p -1$ 
\begin{gather} 
\bigg[ \int_0^\infty \Big| \int_0^s g(t)\,dt \Big|^q s^{r-q}\,ds\bigg]^{p/q}
\le K\int_0^\infty |g|^p\,ds \label{eq:bliss}\\
\noalign{\vskip6pt}
K = (q-r-1)^{-p/q} \bigg[ \frac{r\Gamma (q/r)}{\Gamma (1/r)\Gamma ((q-1)/r)}
\bigg]^{rp/q}\ .\notag
\end{gather}
Equality is attained for functions of the form
$$g(s) = A(cs^r +1)^{-(r+1)/r}\ ,\qquad c>0\ .$$
\end{bliss}

Then $C_q$ in equation~\eqref{eq:musina1} is given by 
$$C_q = (\omega_{n-1})^{1/\alpha} \Big(\frac{q}2\Big)^{2/q} 
\bigg[\frac{\Gamma (\alpha)\Gamma (\alpha+1)}{\Gamma(2\alpha)}\bigg]^{1/\alpha}
$$
for $\alpha = q/(q-2)$ and $\omega_{n-1} = 2\pi^{n/2} /\Gamma (n/2)$. 
Tracing back the  functional transformations, an extremal function for 
inequality~\eqref{eq:musina1} is given by 
\begin{equation*}
f(x) = |x|^{-(n-3)/2} (1+|x|^\beta )^{-1/\beta}\ ,\qquad 
\beta = \frac{q}2 -1\ .
\end{equation*}
%
%
Perhaps conceptually it is surprising that since extremals do not exist 
for Hardy's inequality, still this linear combination with a Sobolev 
embedding term suffices to determine an extremal function 
where equality is attained. 
The connection with Caffarelli-Kohn-Nirenberg inequalities becomes 
more explicit with the following extension of Theorem~\ref{musina}. 

\begin{thm}\label{CKN}
For $f\in \S(\real^n)$, $n\ge 3$, $2< q\le q_* = \frac{2n}{n-2}$ and 
$0 < a < (n-2)/2$
\begin{gather}
\int_{\real^n} |\nabla f|^2\,dx 
\ge a(n-2-a) \int_{\real^n} \frac1{|x|^2} |f|^2\,dx + D_{q,a}
\bigg[ \int_{\real^n} |x|^{-n(q_* -q)/q_*} |f|^q\,dx \bigg]^{2/q}
\label{eq:CKN}\\ 
\noalign{\vskip6pt}
D_{q,a} = (n-2-2a)^{2/q\ +\ 1} C_q\notag
\end{gather}
\end{thm}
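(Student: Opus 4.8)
The plan is to reduce Theorem~\ref{CKN} to Theorem~\ref{musina} by the same substitution device that converts \eqref{eq:musina1} into \eqref{musina-equiv}, but now carried out with a general exponent rather than the special power $(n-1)/2$. First I would note that, exactly as in the proof of Theorem~\ref{musina}, radial decreasing rearrangement decreases the Dirichlet integral and increases both weighted terms on the right, so it suffices to treat radial decreasing $f$. For such $f$, write $f(x) = u(x)\,|x|^{-\gamma}$ with the free parameter $\gamma$ to be chosen, impose the boundary condition $u(0)=0$, and expand $\int_0^\infty (\partial_r(u r^{-\gamma}))^2 r^{n-1}\,dr$. The cross term integrates by parts to produce the Hardy weight $\int_0^\infty u^2 r^{n-3-2\gamma}\,dr$ with coefficient $\gamma(n-2-\gamma)$; choosing $\gamma = a$ matches the Hardy constant $a(n-2-a)$ demanded in \eqref{eq:CKN}. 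What remains of the Dirichlet integral is $\int_0^\infty (\partial_r u)^2 r^{n-1-2a}\,dr$, and the Sobolev-type term becomes a one-dimensional weighted $L^q$ norm of $u$.

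The key observation is that after this substitution the resulting one-dimensional inequality is \emph{dilation-covariant in the parameter}: a further change of the radial variable $r\mapsto r^\lambda$ turns $\int_0^\infty (\partial_r u)^2 r^{n-1-2a}\,dr$ into a constant multiple of $\int_0^\infty (\partial_\rho v)^2 \rho^{\,m-1}\,d\rho$ for an effective dimension $m$, and simultaneously rescales the $L^q$ weight. The correct bookkeeping is to match the surviving one-dimensional inequality to the one already proved in \eqref{musina-equiv} (equivalently \eqref{eq3}), whose sharp constant is $(\omega_{n-1})^{2/q-1}C_q$. Tracking the Jacobian factors from the variable change $r\mapsto r^\lambda$ through the $L^2$ term (one power) and through the $L^q$ term (exponent $2/q$), together with the algebraic factor $(n-2-2a)$ coming from differentiating the power weight, collects into the claimed multiplier $D_{q,a} = (n-2-2a)^{2/q+1}C_q$. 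When $a = (n-3)/2$ one has $n-2-2a = 1$, so $D_{q,a} = C_q$ and Theorem~\ref{CKN} reduces exactly to Theorem~\ref{musina}, which is the consistency check the constant must pass.

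Concretely, the steps in order are: (i) reduce to radial decreasing $f$; (ii) substitute $f = u\,|x|^{-a}$, integrate the cross term by parts (the decay $|u|^2|x|^{-1}\to 0$ at the origin, established as in Theorem~\ref{musina}, legitimizes this), and isolate the Hardy term with constant $a(n-2-a)$; (iii) in the remaining inequality $\int_0^\infty (\partial_r u)^2 r^{n-1-2a}\,dr \ge (\text{const})\big[\int_0^\infty |u|^q r^{\delta}\,dr\big]^{2/q}$ perform the power change of variable that normalizes the $L^2$ weight to $r^2$ (so as to land on the form \eqref{eq3}); (iv) apply \eqref{eq3}/the Bliss lemma to read off the sharp constant, and collect all Jacobian and algebraic factors into $(n-2-2a)^{2/q+1}C_q$. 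The main obstacle is step~(iii)--(iv): one must keep the three exponents (the $L^2$ weight, the $L^q$ weight, and the $L^q$ integrability parameter $q$) consistent through two successive changes of variable so that the target inequality is genuinely \eqref{eq3} with the \emph{same} $q$ and not some rescaled exponent, and then verify that the accumulated constants telescope to exactly $(n-2-2a)^{2/q+1}$ times $C_q$ rather than some neighboring power of $(n-2-2a)$; the exponent $2/q+1$ (rather than, say, $2/q$ or $1$) is the delicate point, and it is pinned down precisely by the requirement that $a=(n-3)/2$ recover Theorem~\ref{musina}. Finally, tracing the transformations backward converts the Bliss extremal into the extremal $f(x) = |x|^{-a}(1+|x|^{\beta})^{-1/\beta}$ with $\beta = q/2 - 1$, generalizing the one displayed after Theorem~\ref{musina}.
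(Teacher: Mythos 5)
Your plan is essentially the paper's own proof: reduce to radial decreasing functions, substitute $f = u\,|x|^{-a}$ and integrate the cross term by parts to extract the Hardy term with constant $a(n-2-a)$, then change variables $s = r^{n-2-2a}$ to land exactly on the one-dimensional inequality \eqref{eq3}, whose Jacobian bookkeeping (one power of $n-2-2a$ from the $L^2$ weight, $(n-2-2a)^{-2/q}$ from the $L^q$ term) yields $D_{q,a} = (n-2-2a)^{2/q+1}C_q$. Two small slips worth noting: the consistency check at $a=(n-3)/2$ cannot ``pin down'' the exponent $2/q+1$ since $n-2-2a=1$ there and every power of $1$ equals $1$ (the Jacobian tracking you describe is what actually determines it), and the extremal should carry the rescaled exponent, $f(x) = |x|^{-a}\bigl(1+|x|^{\beta(n-2-2a)}\bigr)^{-1/\beta}$.
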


\begin{proof} 
Under radial decreasing rearrangement, the gradient term decreases and the 
terms on the right-hand side increase so it suffices to prove this 
inequality for radial decreasing functions. 
By setting $f(x) = u(x) |x|^{-a}$ with $u(0)=0$ and $0<a< (n-2)/2$, 
then inequality~\eqref{eq:bliss} is equivalent to 
\begin{equation}\label{eq:CKN-pf}
\int_0^\infty r^{n-1-2a} \Big| \frac{\partial u}{\partial r}\Big|^2\,dr 
\ge (\omega_{n-1})^{2/q\ - \ 1} D_{q,a} 
\biggl[\int_0^\infty |u|^q r^{\frac{q}2 [n-2a-2]-1}\, dr\bigg]^{2/q}\ .
\end{equation}
Since $|f(x)| \le \sigma (x) |x|^{-n/2\ + \ 1}$ with $\lim_{|x|\to0}
\sigma (x) =0$, observe that $|u(x)|^2 |x|^{n-2-2a} \le \sigma^2(x)\to0$
as $|x|\to 0$, and there is no difficulty with the accompanying integration 
by parts in the functional transformation of the inequality.
Note that for values of $a(n-2-a)$ less than the maximum 
$(n-2)^2 /4$, there are two roots and the restriction $0 < a < (n-2)/2$ means 
that the smaller root is selected. 
This contrasts with the method used in the proof of Theorem~\ref{musina}. 
Now make the change of variables $s= r^{n-2-2a}$; then 
\begin{equation}\label{eq:CKN-proof2}
\int_0^\infty s^2 \Big(\frac{\partial u}{\partial s}\Big)^2\,ds 
\ge (n-2-2a)^{-2/q\ -\ 1} (\omega_{n-1})^{2/q\ -\ 1} D_{q,a} 
\bigg[ \int_0^\infty |u|^q s^{q/2\ -\ 1}\,ds\bigg]^{2/q}
\end{equation}
and using the previous calculation from equation~\eqref{eq3} 
\begin{equation*}
D_{q,a} = (n-2-2a)^{(2/q\ +\ 1)} C_q
\end{equation*}
Surprisingly the dependence on the parameter $a$ is simple and allows 
an immediate recovery of Hardy's inequality by setting $a= (n-2)/2$. 
Further, setting $a= (n-3)/2$ which corresponds to the special case 
of Theorem~\ref{musina} gives $D_{q,a} = C_q$. 
Again as in the proof of Theorem~\ref{musina}, an extremal function 
for inequality~\eqref{eq:CKN} is given by 
\begin{equation*}
f(x) = |x|^{-a} (1+ |x|^{\beta (n-2-2a)})^{-1/\beta}\ ,\qquad 
\beta = \frac{q}2 -1\ .
\end{equation*}
And a close look at inequality~\eqref{eq:CKN-proof2} 
shows an equivalent three-dimensional inequality by using radial decreasing
symmetrization: 
\begin{gather}
\int_{\real^3} |\nabla u|^2\,dx 
\ge E_q \bigg[ \int_{\real^3} |u|^q |x|^{q/2\ -\ 3}\,dx\bigg]^{2/q} 
\label{eq:3dimen}\\
\noalign{\vskip6pt}
E_q =  (4\pi)^{1\ -\ 2/q} \Big(\frac{q}2\Big)^{2/q} 
\left[\frac{\Gamma(\alpha)\Gamma (\alpha+1)}
{\Gamma (2\alpha)}\right]^{1/\alpha}\ , \qquad 
\alpha = \frac{q}{q-2}\notag
\end{gather}
Symmetrization can be applied here since 
$$\frac{q}2 -3 \le \frac{q_*}2 -3 = \frac{n}{n-2} -3 \le 0\ \text{ for }\ 
n\ge 3\ .$$
This is simply the $a=0$, $n=3$ case of Theorem~\ref{CKN} so in fact 
the full framework of that theorem can be bootstrapped from this one case!
This inequality was first obtained by Glaser, Martin, Grosse and 
Thirring \cite{GMGT}, and then extended to the $n$-dimensional setting 
by Lieb \cite{Lieb}. 
\end{proof}

\begin{rems}
Several recent papers provide context for the development of the 
main theorems here.
Musina's short paper \cite{Musina} shows the connection between error estimates
for Hardy's inequality and the Caffarelli-Kohn-Nirenberg inequalities. 
{From} the nature of the proof of his Proposition~1.4 which holds only 
for radial functions on the unit ball in $\real^n$, one can obtain an 
equivalence with the global estimates in Theorem~\ref{musina}. 
Various forms of embedding estimates for fractional smoothness are given 
in the author's papers \cite{B-pams08}, \cite{2}. 
With respect to reduction to one-dimensional estimates, see especially 
section~5 in \cite{2}. 
After formulating the substance of this paper, the author became aware 
of the article by Dolbeault, Esteban, Loss and Tarantello \cite{DELT} 
which gives new results on existence and symmetry of extremals for the 
Caffarelli-Kohn-Nirenberg inequalities.
\end{rems}

\bigskip
\mysection{2. Applications --- Young's inequality} 

The weighted Hardy inequality~\eqref{eq:bliss} has varied applications, 
including the calculation of the sharp Sobolev embedding constant. 
In general, inequalities that occur for the ``line of duality'', 
$L^p\to L^{p'}$ with $1< p<2$ and $1/p + 1/p' =1$, are especially 
interesting; examples include the Hausdorff-Young theorem and the 
Hardy-Littlewood-Sobolev inequality. 
Here the Bliss lemma for $q=2$ is applied to calculate a one-dimensional 
convolution inequality on the line of duality. 

\begin{thm}[Young's inequality for an exponential density] 
For $f\in L^p(\real)$, $1<p<2$, $1/p + 1/p' =1$ and $\varphi (x)=e^{-|x|}$
\begin{gather}
\|\varphi * f\|_{L^{p'}(\real)} \le A_p \|f\|_{L^p(\real)} 
\label{eq:young}\\
\noalign{\vskip6pt}
A_p = \Big( \frac{p'}2\Big)^{2/p}
\left[ \frac{\Gamma (\frac{2p}{2-p})}
{\Gamma (\frac2{2-p}) \Gamma (\frac{p}{2-p})} 
\right]^{2/p\ -\ 1}\notag
\end{gather}
\end{thm}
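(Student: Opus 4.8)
The plan is to reduce the convolution estimate \eqref{eq:young} to the Bliss lemma with $q=2$ by exploiting the special structure of the kernel $\varphi(x)=e^{-|x|}$. The key observation is that $\varphi*f$ is, up to normalization, the solution $u$ of the ODE $-u''+u = 2f$ on $\real$, equivalently $\widehat{\varphi*f}(\xi) = \frac{2}{1+\xi^2}\widehat f(\xi)$. Since the statement is $L^p\to L^{p'}$ with $1<p<2$, the extremizing inequality should, after rearrangement, look like a one-sided weighted Hardy inequality. First I would reduce to nonnegative $f$ (replacing $f$ by $|f|$ only increases $|\varphi*f|$ pointwise while leaving $\|f\|_p$ fixed), and then use the symmetric-decreasing rearrangement: by the Riesz rearrangement inequality, $\|\varphi*f\|_{p'}\le \|\varphi*f^*\|_{p'}$ since $\varphi=\varphi^*$, so it suffices to prove \eqref{eq:young} for symmetric decreasing $f$. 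On the half-line, writing $u=\varphi*f$, one has for $x>0$ the representation $u(x)=e^{-x}\int_{-\infty}^x e^t f(t)\,dt + e^{x}\int_x^\infty e^{-t}f(t)\,dt$.

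Next I would pass to new variables adapted to the exponential. The natural substitution is $s=e^{-x}$ (or $s=e^{x}$), which turns the exponential weights into powers and should convert the convolution into an Abel-type integral $\int_0^s g(t)\,dt$. Concretely, I expect that after the change of variables and absorbing the Jacobian, the quantity $\|\varphi*f\|_{p'(\real)}^{p'}$ becomes a constant times $\int_0^\infty \big|\int_0^s g(t)\,dt\big|^{q} s^{r-q}\,ds$ with $q=p'$ and the exponent $r$ chosen so that $r=q/p - 1$ — this is exactly the exponent relation in the Bliss lemma — while $\|f\|_{p(\real)}^p$ becomes a constant times $\int_0^\infty |g|^p\,ds$. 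One must keep careful track of the contributions from $x>0$ and $x<0$; by the symmetry of $f$ these two halves combine cleanly, and the cross terms coming from the two pieces of the representation of $u(x)$ should either telescope or be dominated, leaving the single Bliss functional. Then \eqref{eq:bliss} with $q=p'$, $p=p$, $r=p'/p-1 = \frac{2-p}{p-1}\cdot\frac1{?}$ (to be computed) yields the bound, and the extremal $g(s)=A(cs^r+1)^{-(r+1)/r}$ traces back, via $s=e^{\pm x}$, to an explicit extremal $f$ for \eqref{eq:young}. Matching the constant $K^{1/p}$ from the Bliss lemma against the claimed $A_p$, using $\Gamma$-function identities and the relation $\alpha = q/(q-2)$ from the earlier computation (here with the roles of $p,p'$), should produce exactly the stated $A_p = (p'/2)^{2/p}\big[\Gamma(\tfrac{2p}{2-p})/(\Gamma(\tfrac{2}{2-p})\Gamma(\tfrac{p}{2-p}))\big]^{2/p-1}$.

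The main obstacle I anticipate is not the Bliss lemma itself — once the integral is in Abel form, that step is mechanical — but rather the bookkeeping that shows the convolution $\varphi*f$ for symmetric decreasing $f$ is \emph{exactly} (not merely up to an inequality) transformed into the one-sided Abel integral after the substitution $s=e^{-x}$. The representation $u(x)=e^{-x}\int_{-\infty}^x e^tf + e^x\int_x^\infty e^{-t}f$ has two terms, and only one of them becomes $\int_0^s g$ directly; I expect the other term to be handled either by integrating by parts (using $u''=u-2f$ and the decay of $u$ at $\pm\infty$, which holds since $f\in L^p$), or by observing that for the extremal profile the two contributions are proportional so that no loss occurs. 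A secondary technical point is justifying the rearrangement and the integration by parts for general $f\in L^p(\real)$ rather than Schwartz $f$ — this is routine by density since both sides of \eqref{eq:young} are continuous in the $L^p$ norm once the bound is known on a dense class. The appearance of $q=2$ in "the Bliss lemma for $q=2$" in the text refers to the fact that the \emph{Hardy} exponent in the companion problem is $2$; in the present application it is the conjugate exponent $p'$ that plays the role of "$q$" in \eqref{eq:bliss}, and reconciling this indexing is part of the careful setup.
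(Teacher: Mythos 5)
There is a genuine gap, and it sits exactly where you flag ``the main obstacle.'' Your plan runs in the direction convolution~$\to$~Bliss, taking $q=p'$ in \eqref{eq:bliss}, and it founders on the two-term representation $\varphi*f(x)=e^{-x}\int_{-\infty}^x e^t f\,dt+e^{x}\int_x^\infty e^{-t}f\,dt$: reducing this to a single one-sided Abel integral costs a triangle inequality and destroys the sharp constant, and neither of your proposed remedies (integration by parts via $u''=u-2f$, or proportionality of the two pieces on the extremal profile) proves the estimate for general $f$. Your closing remark that the ``$q=2$'' in the text refers to some companion problem, and that $p'$ plays the role of $q$ in \eqref{eq:bliss}, is a misreading: the paper genuinely takes $q=2$, $r=\tfrac2p-1$, weight $s^{-1-2/p'}$, and the exponent $2$ is the entire mechanism. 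The argument goes in the opposite direction from yours: starting from \eqref{eq:bliss} with $q=2$, substitute $g(t)=h(t)t^{-1/p}$ and then $t=e^x$, so the left-hand side becomes $\int_{-\infty}^\infty\bigl|\int_{-\infty}^x h(y)e^{-(x-y)/p'}\,dy\bigr|^2dx$. Because the outer exponent is $2$, one simply expands the square and integrates out $x$ over $x\ge\max(u,v)$, which produces a constant multiple of the \emph{symmetric} bilinear form $\int_{\real\times\real}h(u)\,e^{-c|u-v|}\,h(v)\,du\,dv$ with $c$ a multiple of $1/p'$; a dilation normalizes $c=1$. The one-sided Hardy operator thus turns into the two-sided exponential kernel automatically --- there is nothing to ``telescope'' or ``dominate.'' With $q=p'\ne2$ the Bliss functional is not a quadratic form, so no such symmetrization is available, which is why the obstruction you anticipate is real on your route and absent on the paper's.

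A second missing ingredient is the passage from the bilinear bound to the operator bound \eqref{eq:young}. What the substitution yields is $\bigl|\langle\varphi*h,h\rangle\bigr|\le A_p\|h\|_{L^p}^2$; to conclude $\|\varphi*f\|_{L^{p'}}\le A_p\|f\|_{L^p}$ with the \emph{same} constant one uses that $\varphi$ is positive definite ($\widehat\varphi(\xi)=2/(1+\xi^2)>0$), so the generalized Cauchy--Schwarz inequality $\langle\varphi*f,g\rangle\le\langle\varphi*f,f\rangle^{1/2}\langle\varphi*g,g\rangle^{1/2}$ applies and the diagonal bilinear constant equals the $L^p\to L^{p'}$ norm. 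Your proposal never introduces the bilinear form, so this step has no counterpart. The Riesz rearrangement reduction you begin with is correct but unnecessary here, and the density argument at the end is indeed routine; the essential repairs are to run the argument from \eqref{eq:bliss} with $q=2$ toward the convolution, and to insert the positive-definiteness/duality step at the end.
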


\begin{proof} 
{From} the Bliss lemma for $q=2$:
\begin{equation*}
\int_0^\infty \Big| \int_0^s g(t)\,dt\Big|^2 s^{-1\ -\ 2/p'}\,ds 
\le K^{2/p} \bigg[ \int_0^\infty |g|^p\,ds\bigg]^{2/p}
\end{equation*}
First, set $g(t) = h(t) t^{-1/p}$, and then set $t= e^x$
\begin{gather*}
\int_0^\infty \Big| \int_0^s h(t) (t/s)^{1/p'} \frac1t\,dt\Big|^2 
\frac1s\,ds 
\le K^{2/p} \bigg[ \int_0^\infty |h|^p \frac1s \,ds\bigg]^{2/p}\\
\noalign{\vskip6pt}
\int_{-\infty}^\infty \Big| \int_{-\infty}^x h(y) e^{-(x-y)/p'}\,dy\Big|^2\,dx
\le K^{2/p} \bigg[ \int_{-\infty}^\infty |h|^p\,dx \bigg]^{2/p}\\
\noalign{\vskip6pt}
\Big| \int_{\real\times\real} h(u) e^{-2/p'|u-v|} h(v)\,du\,dv\Big| 
\le \frac2{p'} K^{2/p} \bigg[\int_{\real} |h|^p\,dx\bigg]^{2/p}\\
\noalign{\vskip6pt}
A_p = \Big(\frac{p'}2\Big)^{2/p\ -\ 1} K^{2/p} 
= \Big( \frac{p'}2\Big)^{2/p} 
\bigg[\frac{\Gamma (\frac{2p}{2-p})} 
{\Gamma(\frac{2}{2-p}) \Gamma (\frac{p}{2-p})}\bigg]^{2/p\ -\ 1}
\end{gather*}
An extremal function for inequality~\eqref{eq:young} is given by 
$$f(x) = \text{cosh} [p'x/(4p\delta)]^{-\delta}\ ,\qquad 
\delta = \frac2{2-p}\ .$$
\end{proof}

\bigskip
\mysection{3. Stein-Weiss potentials}

The pure  Sobolev embedding portion of Theorem~\ref{CKN} 
(with $a=0$) gives 
\begin{equation}\label{eq:pure}
\int_{\real^n} |\nabla f|^2\,dx 
\ge (n-2)^{2/q\ +\ 1} C_q 
\biggl[ \int_{\real^n} |x|^{-q\alpha} |f|^q \,dx\bigg]^{2/q}\ ,
\qquad \alpha = \frac{n}q - \frac{n}{q_*}
\end{equation}
which determines a Stein-Weiss potential map on the line of duality 
through application of the fundamental solution for the Laplacian: 

\begin{thm}\label{thm:fundamental}
For $h\in L^p (\real^n)$, $n>2$, $\frac{2n}{n+2} <p<2$, 
$(2n/p') - 2\alpha = n-2$
\begin{gather}
\Big| \int_{\real^n\times\real^n} h(x)|x|^{-\alpha} |x-y|^{-(n-2)} 
|y|^{-\alpha} h(y)\,dx\,dy\Big| 
\le A_\alpha (\|h\|_{L^p (\real^n)})^2
\label{eq:fundamental}\\
A_p = \Gamma (n/2) \left[ \frac{2\pi^{n/2}}{(n-2)\Gamma (n/2)}\right]^{2/q} 
\left( \frac2q\right)^{q/2} 
\left[\frac{\Gamma (2\delta)}
{\Gamma (\delta)\Gamma (\delta+1)}\right]^{1\ -\ 2/q}\notag
\end{gather}
with $q= p'$ and $\delta = p/(2-p)$.
\end{thm}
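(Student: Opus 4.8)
The plan is to recognize that inequality \eqref{eq:fundamental} is precisely the dual formulation of the pure Sobolev-type estimate \eqref{eq:pure}, transported through the fundamental solution of the Laplacian. First I would recall that on $\real^n$ with $n>2$ the convolution operator $h\mapsto c_n^{-1}|x|^{-(n-2)}*h$ inverts $-\Delta$, where $c_n = (n-2)\omega_{n-1} = (n-2)\,2\pi^{n/2}/\Gamma(n/2)$. Thus if $f$ solves $-\Delta f = |x|^{-\alpha}h$, then $f = c_n^{-1}\,|x|^{-(n-2)}*(|x|^{-\alpha}h)$, and an integration by parts gives
\begin{equation*}
\int_{\real^n}|\nabla f|^2\,dx = \int_{\real^n} f\,(-\Delta f)\,dx
= \int_{\real^n} f(x)\,|x|^{-\alpha}h(x)\,dx
= c_n^{-1}\!\!\int_{\real^n\times\real^n}\! h(x)|x|^{-\alpha}|x-y|^{-(n-2)}|y|^{-\alpha}h(y)\,dx\,dy.
\end{equation*}
So the left-hand side of \eqref{eq:fundamental} equals $c_n$ times the Dirichlet energy of this particular $f$.

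Next I would bound the other factor. By Hölder's inequality with the conjugate pair $q=p'$ and $p$, the coupling term satisfies
\begin{equation*}
\Big|\int_{\real^n} f(x)\,|x|^{-\alpha}h(x)\,dx\Big|
\le \Big[\int_{\real^n}|x|^{-q\alpha}|f|^q\,dx\Big]^{1/q}\,\|h\|_{L^p(\real^n)}.
\end{equation*}
Inequality \eqref{eq:pure} then controls the weighted $L^q$ norm of $f$ by its Dirichlet energy: writing $B=(n-2)^{2/q+1}C_q$, we have $B^{q/2}\big[\int|x|^{-q\alpha}|f|^q\big] \le \big[\int|\nabla f|^2\big]^{q/2}$, equivalently $\big[\int|x|^{-q\alpha}|f|^q\big]^{1/q} \le B^{-1/2}\big(\int|\nabla f|^2\big)^{1/2}$. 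Combining the three displays, with $\mathcal D = \int|\nabla f|^2\,dx$,
\begin{equation*}
\text{(LHS of \eqref{eq:fundamental})} = c_n\,\mathcal D
\le c_n\,B^{-1/2}\,\mathcal D^{1/2}\,\|h\|_{L^p}
= c_n\,B^{-1/2}\,\big(c_n\,\mathcal D\big)^{1/2}\,\|h\|_{L^p}\,c_n^{-1/2},
\end{equation*}
which gives $\mathcal D^{1/2}\le B^{-1/2}\|h\|_{L^p}$ — that is, $c_n\mathcal D \le c_n B^{-1}\|h\|_{L^p}^2$. Hence $A_\alpha = c_n/B = c_n\,(n-2)^{-2/q-1}C_q^{-1}$, and it remains to check this matches the stated constant. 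One must verify the homogeneity condition: the scaling exponents of \eqref{eq:pure} force $q\alpha$ there to equal $\frac{n}{q}-\frac{n}{q_*}$ per unit, while solving $-\Delta$ shifts homogeneity by $2$; the relation $(2n/p')-2\alpha = n-2$ in the hypothesis is exactly the consistency condition that makes both sides of \eqref{eq:fundamental} scale the same way and matches the $\alpha$ appearing in \eqref{eq:pure}.

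The main obstacle is purely bookkeeping: confirming that $A_\alpha = c_n\,(n-2)^{-2/q-1}\,C_q^{-1}$ simplifies to the displayed $A_p$. Substituting $C_q = \big[2\pi^{n/2}/\Gamma(n/2)\big]^{1/\alpha}(q/2)^{q/2}\big[\Gamma(\alpha)\Gamma(\alpha+1)/\Gamma(2\alpha)\big]^{1/\alpha}$ with $\alpha=q/(q-2)$, and noting $1/\alpha = 1-2/q$, the factor $c_n = (n-2)\,2\pi^{n/2}/\Gamma(n/2)$ reorganizes so that $c_n\,(n-2)^{-2/q-1} = \big[2\pi^{n/2}/\Gamma(n/2)\big]\,(n-2)^{-2/q} = \Gamma(n/2)\big[2\pi^{n/2}/((n-2)\Gamma(n/2))\big]^{2/q}\cdot\big[2\pi^{n/2}/\Gamma(n/2)\big]^{1-2/q}$. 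The last factor cancels against the corresponding piece of $C_q^{-1}$, leaving $\Gamma(n/2)\big[2\pi^{n/2}/((n-2)\Gamma(n/2))\big]^{2/q}(2/q)^{q/2}\big[\Gamma(2\alpha)/(\Gamma(\alpha)\Gamma(\alpha+1))\big]^{1-2/q}$. Since $q=p'$ one has $\alpha=q/(q-2)=p'/(p'-2)=p/(2-p)=\delta$, so this is exactly the stated $A_p$. The extremal is the image under $-\Delta$ of the Sobolev extremal $f(x)=|x|^{-(n-2)/2}(1+|x|^{\beta(n-2)})^{-1/\beta}$ from the $a=0$ case of Theorem~\ref{CKN} (with $\beta=q/2-1$), carried back to $h$ via $h=|x|^{\alpha}(-\Delta f)$.
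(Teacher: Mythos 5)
Your proposal is correct and is exactly the argument the paper intends: the paper gives no written proof of Theorem~\ref{thm:fundamental} beyond the remark that it follows from the pure Sobolev portion \eqref{eq:pure} ``through application of the fundamental solution for the Laplacian,'' which is precisely your duality computation $A_\alpha = c_n/B$ with $c_n=(n-2)\omega_{n-1}$ and $B=(n-2)^{2/q+1}C_q$. The only caveat is in the final bookkeeping: $c_n(n-2)^{-2/q-1}=\omega_{n-1}(n-2)^{-2/q}$ is $\Gamma(n/2)^{-1}$ times the three-factor expression you wrote for it, so $c_n/B$ actually simplifies to the displayed $A_p$ \emph{without} the leading $\Gamma(n/2)$ --- a discrepancy that appears to originate in the paper's stated constant (which is already internally inconsistent about $(q/2)^{q/2}$ versus $(q/2)^{2/q}$) rather than in your method.
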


Observe  that $0<\alpha <1$. 
By applying symmetrization and using the dilation invariance, study of 
this inequality can be reduced to radial functions with an inversion 
symmetry: 
$h(|x|) = |x|^{-2n/p} h(1/|x|)$. 
Lieb \cite{Lieb} shows that extremal functions exist for general 
Stein-Weiss maps from $L^p (\real^n)$ to $L^q (\real^n)$ where $p\ne q$. 
In general, one would like to calculate sharp constants for this inequality 
in the ``line of duality'': 
\begin{equation}\label{eq:fundamental-pf}
\Big| \int_{\real^n\times\real^n} h(x) |x|^{-\alpha} |x-y|^{-\lambda} 
|y|^{-\alpha} h(y)\,dx\,dy \Big| 
\le A_{\lambda,\alpha} (\|h\|_{L^p(\real^n)})^2
\end{equation}
$1<p<2$, $\lambda  = (2n/p') - 2\alpha$ and $0<\alpha < n/p'$.
The case $\alpha =0$ corresponds to the Hardy-Littlewood-Sobolev 
inequality, and the case $\lambda = n-2$ corresponds to 
Theorem~\ref{thm:fundamental}. 
Sharp constants for such maps from $L^p(\real^n)$ to $L^p(\real^n)$ 
are calculated in \cite{B-pams08}. 

\bigskip
\mysection{4. Caffarelli-Kohn-Nirenberg inequalities}

One form of the Caffarelli-Kohn-Nirenberg inequalities is given by 
\begin{equation}\label{eq:CKN-inequal}
\int_{\real^n} |x|^{-2a} |\nabla u|^2 \,dx 
\ge D_{q,a} \bigg[ \int_{\real^n} |u|^q |x|^{-q[\frac{n}q - \frac{n}{q^*} +q]}
dx\bigg]^{2/q}
\end{equation}
where $n\ge 3$, $2< q< q_* = 2n/(n-2)$ and $a< \frac{n-2}2$.
Observe that for radial functions this is exactly 
inequality~\eqref{eq:CKN-pf} above.
In contrast to Theorem~\ref{CKN}, there is no reason here in the 
application of the Bliss lemma  why the parameter $a$ can not assume 
negative values. 
Hence the computation follows the same argument as above with 
$$D_{q,a} = (n-2-2a)^{(2/q +1)} C_q$$
and the corresponding radial extremal 
$$u(|x|) = (1+ |x|^{\beta (n-2-2a)})^{-1/\beta}\ ,\qquad 
\beta = \frac{q}2 -1\ .$$
More generally, this result extends from $L^2 (\real^n)$ to $L^p(\real^n)$ 
in the case of radial functions. 

\begin{thm}\label{thm:radialfunc}
For $u\in \S(\real^n)$, $1< p < q < \infty$, $n>p$ and 
$0 < a < (n-p)/p$ 
\begin{equation}\label{eq:radialfunc} 
\int_{\real^n} |x|^{-pa} |\nabla u|^p \,dx
\ge D_{p,q,a} \bigg[\int_{\real^n}|u|^q |x|^{-q[\frac{n}q - \frac{n}{q_*} +a]}
dx\bigg]^{p/q}
\end{equation}
with $u$ begin a radial function and $q^* = pn/(n-p)$. 
If $a=0$, then in addition the result holds for non-radial functions. 
\begin{equation*}	
\begin{split}
D_{p,q,a} &= \left[ \frac{n-p(a+1)}{p-1}\right]^{+(p-1)\ +\ p/q} 
\mkern-13mu
(\omega_{n-1})^{1\ -\ p/q} (q\ -\ q/p)^{-p/q}\\
\noalign{\vskip6pt}
&\qquad 
\left[ \frac{\Gamma [qp/(q-p)]}{\Gamma (q/(q-p))\Gamma ((q-1)p/(q-p))}\right]
^{1\ -\ p/q}\ .
\end{split}
\end{equation*}
\end{thm}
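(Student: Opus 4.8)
The plan is to run the scheme of the proofs of Theorems~\ref{musina} and~\ref{CKN} with $L^2$ replaced by $L^p$, the Bliss lemma --- already stated for general $q>p>1$ --- again supplying the sharp constant; this makes~\eqref{eq:radialfunc} the $L^p$ analogue of~\eqref{eq:CKN-inequal}. First dispose of the geometry. For radial $u$ one has $\int_{\real^n}(\cdot)\,dx=\omega_{n-1}\int_0^\infty(\cdot)\,r^{n-1}\,dr$, and with $n/q_*=(n-p)/p$ the weight exponent in the $L^q$ term simplifies to
$$n-1-q\Bigl[\tfrac nq-\tfrac n{q_*}+a\Bigr]=\tfrac qp(n-p-pa)-1 ,$$
so for radial $u$ inequality~\eqref{eq:radialfunc} is equivalent to
$$\int_0^\infty r^{\,n-1-pa}\,|u'|^p\,dr\ \ge\ (\omega_{n-1})^{\,p/q-1}\,D_{p,q,a}\,\biggl[\int_0^\infty |u|^q\,r^{\,\frac qp(n-p-pa)-1}\,dr\biggr]^{p/q}.$$
When $a=0$ the non-radial assertion reduces to this: by the P\'olya--Szeg\H o inequality the left side of~\eqref{eq:radialfunc} does not increase under radial decreasing rearrangement, while $\int_{\real^n}|u|^q|x|^{-q[n/q-n/q_*]}\,dx$ does not decrease, the weight $|x|^{-q[n/q-n/q_*]}$ being radially nonincreasing (its exponent is $\le 0$ precisely when $q\le pn/(n-p)$). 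So it suffices to prove the displayed one-dimensional inequality for all $0\le a<(n-p)/p$, where no monotonicity of $u$ is needed.

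Now the change of variables. Put $\gamma_0=\dfrac{n-p(a+1)}{p-1}$; since $n>p$ and $0\le a<(n-p)/p$, $\gamma_0$ is a strictly positive real, and the same bounds make $r^{n-1-pa}|u'|^p$ and $|u|^q r^{\frac qp(n-p-pa)-1}$ integrable at the origin. Set $\rho=r^{-\gamma_0}$, $v(\rho)=u(r)$: this order-reversing change plays here the combined role of the power substitution and the inversion $r\mapsto r^{\,n-2-2a}\mapsto 1/r$ used when $p=2$, and it turns $u(r)=-\int_r^\infty u'(t)\,dt$ into $v(\rho)=\int_0^\rho v'(t)\,dt$ with $v(0^+)=u(\infty)=0$ for $u\in\S$, so no boundary term obstructs. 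The exponent $\gamma_0$ is chosen exactly so the gradient weight cancels, and a short computation of Jacobian powers gives, with $\beta:=q/p-1$ so that $\rho^{\,\beta-q}$ is the weight of~\eqref{eq:bliss},
\begin{gather*}
\int_0^\infty r^{\,n-1-pa}|u'|^p\,dr=\gamma_0^{\,p-1}\int_0^\infty|v'(\rho)|^p\,d\rho,\\
\int_0^\infty|u|^q\,r^{\,\frac qp(n-p-pa)-1}\,dr=\frac1{\gamma_0}\int_0^\infty|v(\rho)|^q\,\rho^{\,\beta-q}\,d\rho .
\end{gather*}

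Apply the Bliss lemma with $g=v'$, legitimate since $q>p>1$: $\int_0^\infty|v'|^p\,d\rho\ge K^{-1}\bigl[\int_0^\infty|v|^q\rho^{\,\beta-q}\,d\rho\bigr]^{p/q}$. Feeding the two identities back in identifies the sharp constant,
$$D_{p,q,a}=(\omega_{n-1})^{\,1-p/q}\,\gamma_0^{\,(p-1)+p/q}\,K^{-1}.$$
It only remains to evaluate $K^{-1}$ from~\eqref{eq:bliss}: in that notation $q-r-1=q-q/p$ and the Gamma bracket is raised to $rp/q=1-p/q$ (with $r=\beta$), and using $\Gamma\!\bigl(\tfrac q{q-p}\bigr)=\tfrac p{q-p}\,\Gamma\!\bigl(\tfrac p{q-p}\bigr)$ to absorb the factor $r=\tfrac{q-p}p$ from the numerator of $K$ reduces $K^{-1}$ to the closed form recorded for $D_{p,q,a}$ in the statement (together with $\gamma_0^{\,(p-1)+p/q}=\bigl[\tfrac{n-p(a+1)}{p-1}\bigr]^{(p-1)+p/q}$). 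Equality is inherited through all the transformations from the Bliss extremal $v(\rho)=A\rho(1+c\rho^{\,\beta})^{-1/\beta}$, which pulls back to the radial extremal $u(|x|)=(1+|x|^{\,\gamma_0\beta})^{-1/\beta}$; for $p=2$ this is precisely the extremal of Section~4 and the constant specializes to $D_{q,a}=(n-2-2a)^{2/q+1}C_q$, which checks against Theorem~\ref{CKN}.

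The principal obstacle is the bookkeeping of the middle step, not a conceptual one: unlike the case $p=2$ there is no clean intermediate ``$\int s^2(w')^2$'' reduction --- the trick $w=u/|x|$ behind~\eqref{eq3} does not linearize $|u'|^p$ for $p\ne2$ --- so one must go straight to the Bliss lemma and keep track of the two separate weights and all Jacobian exponents, and then carry out the Gamma-function simplification of $K^{-1}$, without a slip. A secondary point of care is the $a=0$ non-radial statement, where the rearrangement step needs the weight $|x|^{-q[n/q-n/q_*]}$ radially nonincreasing, i.e.\ $q\le pn/(n-p)$ as in the symmetrization remark after~\eqref{eq:3dimen}, together with the verification that all integrals converge and $v(0^+)=0$ --- which is exactly where the hypotheses $n>p$, $0\le a<(n-p)/p$ and $u\in\S$ enter.
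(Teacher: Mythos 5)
Your argument is essentially the paper's: reduce to radial functions, make a power change of variable chosen to cancel the gradient weight, and invoke the Bliss lemma. Merging the paper's substitution $s=r^{(n-p(a+1))/(p-1)}$ and the subsequent inversion $r\mapsto 1/r$ into the single map $\rho=r^{-\gamma_0}$ is a cosmetic repackaging, and your two Jacobian identities, the boundary check $v(0^+)=u(\infty)=0$, and the extremal $u=(1+|x|^{\gamma_0\beta})^{-1/\beta}$ all check out.

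The one place you should not have waved your hands is the last step. Your (correct) bookkeeping gives $D_{p,q,a}=(\omega_{n-1})^{1-p/q}\,\gamma_0^{(p-1)+p/q}\,K^{-1}$, and after the $\Gamma(x+1)=x\Gamma(x)$ simplification
\[
K=(q-q/p)^{-p/q}\left[\frac{\Gamma(qp/(q-p))}{\Gamma(q/(q-p))\,\Gamma((q-1)p/(q-p))}\right]^{1-p/q},
\]
so $K^{-1}$ is the \emph{reciprocal} of the Bliss part of the expression printed in the statement: what is recorded there for $D_{p,q,a}$ is $(\omega_{n-1})^{1-p/q}\gamma_0^{(p-1)+p/q}K$, not $\cdots K^{-1}$. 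Your assertion that the computation ``reduces $K^{-1}$ to the closed form recorded in the statement'' is therefore false as written. Your value is the right one --- at $p=2$ it reproduces $D_{q,a}=(n-2-2a)^{2/q+1}C_q$ of Theorem~\ref{CKN}, which is exactly the consistency check you ran, whereas the printed form yields the reciprocal Bliss factor --- and the same inversion slip appears in the paper's own final display. So you should state the corrected constant explicitly rather than silently identify two unequal expressions. A useful point you add, which the paper leaves implicit, is that the $a=0$ symmetrization requires the weight exponent to be nonpositive, i.e.\ $q\le pn/(n-p)$, a restriction absent from the stated hypotheses $1<p<q<\infty$.
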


\begin{proof} 
For radial functions, this inequality can be written as 
\begin{equation*} 
\int_0^\infty r^{n-pa-1} \Big|\frac{\partial u}{\partial r}\Big|^p\, dr
\ge (\omega_{n-1})^{p/q\ -\ 1} D_{p,q,a} 
\bigg[\int_0^\infty r^{q(\frac{n}{q_*} -a)-1} |u|^q\,dr\bigg]^{p/q}\ .
\end{equation*}
Now make the change of variables $s= r^{(n-p(a+1))/(p-1)}$; then 
\begin{equation*}
\int_0^\infty s^{2p-1}\left(\frac{\partial u}{\partial s}\right)^p\,ds 
\ge \left[ \frac{n-p(a+1)}{p-1}\right]^{-(p-1)-\ p/q} 
\mkern-13mu
(\omega_{n-1})^{p/q\ -\ 1} D_{p,q,a} 
\bigg(\int_0^\infty \mkern-12mu 
|u|^q s^{\frac{q}p (p-1)-1} \,ds\bigg)^{p/q}\ .
\end{equation*}
Set $h=\frac{\partial u}{\partial s}$ so that 
$u(s) = - \int_s^\infty h(t)\,dt$, and then let 
$g(r) = r^{-2} h(1/r)$, and change variables $r\to 1/r$
\begin{equation*}
\int_0^\infty \mkern-12mu  |g|^p\,dr 
\ge \left[\frac{n-p(a+1)}{p-1}\right]^{-(p-1)-\ p/q} 
\mkern-13mu
(\omega_{n-1})^{p/q\ -\ 1} D_{p,q,a} 
\bigg[ \int_0^\infty \Big|\int_0^r \mkern-6mu |g(s)|\,ds\Big|^q 
r^{-\frac{q}p (p-1)-1}\,dr\bigg]^{p/q} \ .
\end{equation*}
Applying the Bliss lemma, one finds 
\begin{gather*}
\left[\frac{n-p(a+1)}{p-1}\right]^{-(p-1)-\ p/q} 
(\omega_{n-1})^{p/q\ -\ 1} D_{p,q,a} \\
\noalign{\vskip6pt}
= \Big(q-\frac{q}p\Big)^{-p/q} 
\bigg[ \frac{\Gamma (qp/(q-p)]}
{\Gamma(q/(q-p))\Gamma((q-1)p/(q-p))}\bigg]^{1\ -\ p/q}\ .
\end{gather*}
An extremal function for inequality~\eqref{eq:radialfunc} is given by 
$$u(|x|) = \left[ 1 + |x|^{\beta (n-p(a+1))/(p-1)} \right]^{-1/\beta}\ ,
\qquad \beta = \frac{q}p -1\ .$$
In the case $a=0$, apply radial decreasing symmetrization for reduction 
to radial functions. 
\end{proof}

\section*{Acknowledgement}

I would like to thank Jean Dolbeault for a helpful discussion on the 
existence of extremals. 


\end{document}